\theoremstyle{plain}
\newtheorem{theorem}{Theorem}[section]
\theoremstyle{definition}
\newtheorem{definition}{Definition}[section]
\theoremstyle{remark}
\newtheorem{example}{Example}
\numberwithin{equation}{section}
\begin{document}
\title[Well posed neutral difference equations]{On the well posed solutions
for nonlinear second order neutral difference equations}
\author{Marek Galewski*}
\author[Marek Galewski \and Ewa Schmeidel]{Ewa Schmeidel**}
\address{\llap{*\,}Institute of Mathematics\\
\indent Lodz University of Technology\\
\indent Wolczanska 215\\
\indent94-011 Lodz\\
\indent POLAND}
\email{marek.galewski@p.lodz.pl}
\address{\llap{**\,}Faculty of Mathematics and Computer Science\\
\indent University of Bialystok\\
\indent Akademicka 2\\
\indent15-267 Bialystok\\
\indent POLAND}
\email{eschmeidel@math.uwb.edu.pl}
\subjclass[2010]{Primary 39A10, 39A12; Secondary 39A30}
\keywords{Difference equation, Darbo fixed point theorem, Emden--Fowler
equation, Lyapunov type stability, Hadamard well posedness}

\begin{abstract}
In this work we investigate the existence of solutions, their uniqueness and
finally dependence on parameters for solutions of second order neutral
nonlinear difference equations. The main tool which we apply is Darbo fixed
point theorem.
\end{abstract}

\maketitle

\section{Introduction}

In this paper we consider a second order neutral difference equation 
\begin{equation}
\Delta \left( r_{n}\left( \Delta \left( x_{n}+p_{n}x_{n-k}\right) \right)
^{\gamma _{n}}\right) +q_{n}x_{n}^{\alpha }+a_{n}f(x_{n+1})=0,  \label{e0}
\end{equation}%
with variable exponent being the ratio of odd positive integers. To be
precise, $\left( \gamma _{n}\right) $ is the sequence consisting of the
ratio of odd positive integers, $\alpha \geq 1$, $x:N_{0}\rightarrow R$, $%
a,p,q:N_{0}\rightarrow R,$ $r:N_{0}\rightarrow R\setminus \{0\}$ are
sequences and $f:R\rightarrow R$ is a locally Lipschitz function satisfying
no further growth assumptions. We denote $N_{0}:=\left\{ 0,1,2,\dots
\right\} $, $N_{k}:=\left\{ k,k+1,\dots \right\} $ where $k$ is a given
positive integer, and $R$ is a set of all real numbers. By a solution of
equation~\eqref{e0} we mean a sequence $x:N_{k}\rightarrow R$ which
satisfies~\eqref{e0} for every $n\in \ N_{k}$.

Since difference equations arise naturally in diverse areas, such as
economy, biology, physics, mechanics, computer science, finance, see for
example \cite{bib1B}, \cite{bib2B}, it is of interest to know conditions
which guarantee

a) existence of solutions,

b) uniqueness,

c) dependence of the solutions on parameters.

Problems satisfying all three conditions are called well--posed. Such
investigations have been undertaken for two point discrete boundary value
problems based in a finite dimensional space, see for example \cite{MG},
where the variational methods are applied. However, to the best of the
authors knowledge, similar results have not been obtained for second order
neutral difference equations. The approach by critical point theory allows
for investigation connected with dependence on parameters without having
unique solution. We are unable to adopt this approach since in $L^{\infty }$
one cannot select weakly convergent subsequences from all bounded sequences.
Moreover, the new idea used in this paper is to consider \eqref{e0} with
variable the ratio of odd positive integers, contrary to earlier results by
these authors \cite{MGES}, where this ratio is held fixed.

Problem \eqref{e0} can be seen as a generalized version of the Emden--Fowler
equation which originated in the gaseous dynamics in astrophysics and
further was used in the study of fluid mechanics, relativistic mechanics,
nuclear physics and in the study of chemically reacting systems, see \cite%
{wong}. Moreover, taking $p\equiv 0$, $q\equiv 0$ and $f(x)=x$ we get a type
of a Sturm--Liouville difference equation investigated for example by Do\v{s}%
l \'{y} in \cite{dosly1}, \cite{DOSLY2}, \cite{DOSLY3}.

A number of the mathematical models described by discrete equations can be
found in numerous well--known monographs: Agarwal~\cite{bib1B}, Agarwal,
Bohner, Grace and O'Regan \cite{ABGO}, Agarwal and Wong \cite{AW}, Elaydi~ 
\cite{bib2B}, Koci\'{c} and Ladas~\cite{bib3B} and Peterson~\cite{bib4B}.

We mention that there has already been some interest in properties of
solutions of second order difference equations: see, for example Bajo and
Liz \cite{B}, Koci\'{c} and Ladas \cite{KocicLadas}, Qian and Yan \cite%
{LadasQian}, Migda and Migda \cite{bib2}, Migda, Schmeidel and Zb\c{a}%
szyniak~\cite{bib3}, Schmeidel \cite{bib5}, and Thandapani, Kavitha and
Pinelas \cite{TKP1}--\cite{TKP2}.

Paper is organized as follows. Firstly, we provide necessary preliminaries
on the measure of noncompactness. Next we derive existence result. Further,
assuming that $\alpha =1$ we investigate a special type stability and the
uniqueness of a solution which seem to follow the same proof. Finally, we
consider dependence on parameters. Each further step in our investigations
restricts the type of nonlinear functions which can be taken into account.

\section{Preliminaries}

In this paper, we will use axiomatically defined measures of noncompactness
as presented in paper~\cite{bib1} by Bana\'{s} and Rzepka which we now
recall for reader's convenience (see also \cite{SchmeildelZbaszyniakCAMW}).
We also refer to the monograph \cite{bibM1B}. Let $(E,\left\Vert \cdot
\right\Vert )$ be an infinite--dimensional Banach space. If $X$ is a subset
of $E$, then $\bar{X}$, $ConvX$ denote the closure and the convex closure of 
$X$, respectively. Moreover, we denote by $M_{E}$ the family of all nonempty
and bounded subsets of $E$ and by $N_{E}$ denote the subfamily consisting of
all relatively compact sets.

\begin{definition}
A mapping $\mu \colon M_{E}\rightarrow \lbrack 0,\infty )$ is called a
measure of noncompactness in $E$ if it satisfies the following conditions:

\begin{description}
\item[$1^{0}$] $\ker \mu =\left\{ X\in {\mathcal{M}}_{E}\colon \mu
(X)=0\right\} \neq \emptyset \mbox{  and  }\ker \mu \subset N_{E},$

\item[$2^{0}$] $X\subset Y\Rightarrow \mu (X)\leq \mu (Y),$

\item[$3^{0}$] $\mu (\bar{X})=\mu (X)=\mu (Conv\,\,X),$

\item[$4^{0}$] $\mu (\alpha X+(1-\alpha )Y)\leq \alpha \mu (X)+(1-\alpha
)\mu (Y)\mbox{  for  }0\leq \alpha \leq 1,$

\item[$5^{0}$] If $X_{n}\in M_{E},\,\,\,X_{n+1}\subset X_{n},\,\,\,X_{n}=%
\bar{X_{n}}\mbox{  for  }n=1,2,3,\dots $

and $\lim\limits_{n\rightarrow \infty }\mu (X_{n})=0\mbox{  then  }%
\bigcap\limits_{n=1}^{\infty }X_{n}\neq \emptyset $.
\end{description}
\end{definition}

The following Darbo's fixed point theorem given in~\cite{bib1} is used in
the proof of the main result.

\begin{theorem}
\label{D} Let $M$ be a nonempty, bounded, convex and closed subset of the
space $E$ and let $T:M\rightarrow M$ be a continuous operator such that $\mu
(T(X))\leq k\mu (X)$ for all nonempty subset $X$ of $M$, where $k\in \lbrack
0,1)$ is a constant. Then $T$ has a fixed point in the subset $M$.
\end{theorem}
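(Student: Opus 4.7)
The plan is to construct a decreasing sequence of closed, convex, bounded, $T$-invariant subsets of $M$ whose measure of noncompactness tends to zero, and then apply Schauder's fixed point theorem on the intersection. Set $M_{0}:=M$ and inductively $M_{n+1}:=\overline{Conv(T(M_{n}))}$ for $n\geq 0$.

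First I would verify by induction that each $M_{n}$ is a nonempty, bounded, closed, and convex subset of $M$ satisfying $T(M_{n})\subset M_{n}$ and $M_{n+1}\subset M_{n}$. Boundedness is preserved because $T(M_{n})\subset M$ and $M$ is bounded; closedness and convexity are built into the definition of $M_{n+1}$. The inclusion $M_{n+1}\subset M_{n}$ holds because $T(M_{n})\subset M_{n}$ and $M_{n}$ is closed and convex, so $\overline{Conv(T(M_{n}))}\subset M_{n}$; the self-map property then gives $T(M_{n+1})\subset T(M_{n})\subset M_{n+1}$.

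Next, using properties $2^{0}$ and $3^{0}$ of the measure of noncompactness together with the hypothesis $\mu(T(X))\leq k\mu(X)$, I would obtain
\begin{equation*}
\mu(M_{n+1})=\mu\bigl(\overline{Conv(T(M_{n}))}\bigr)=\mu(T(M_{n}))\leq k\,\mu(M_{n}),
\end{equation*}
and inductively $\mu(M_{n})\leq k^{n}\mu(M)$, which tends to $0$ since $k\in[0,1)$. By axiom $5^{0}$ the set $M_{\infty}:=\bigcap_{n=0}^{\infty}M_{n}$ is nonempty. It is closed and convex as an intersection of closed convex sets, and since $\mu(M_{\infty})\leq \mu(M_{n})\to 0$ by monotonicity, $M_{\infty}\in\ker\mu$ and hence is relatively compact by $1^{0}$; being closed it is compact. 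The $T$-invariance $T(M_{\infty})\subset M_{\infty}$ follows because $T(M_{\infty})\subset T(M_{n})\subset M_{n+1}$ for every $n$. Schauder's fixed point theorem applied to the continuous self-map $T\colon M_{\infty}\to M_{\infty}$ of the nonempty compact convex set $M_{\infty}$ then yields a fixed point of $T$ in $M_{\infty}\subset M$.

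The main obstacle is making sure that every step is legitimate under the purely axiomatic framework: in particular, one must verify carefully that the recursion produces \emph{nonempty} sets at each finite stage (using $M\neq\emptyset$ together with $T(M)\subset M$) and that nonemptiness is transferred to the intersection, which is precisely the role of axiom $5^{0}$. Everything else, including the behaviour of $\mu$ under closure and convex hull and the final appeal to Schauder's theorem, is a routine verification once the four properties of $M_{\infty}$ --- nonempty, compact, convex, $T$-invariant --- have been established.
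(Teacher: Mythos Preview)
Your proof is correct and is essentially the classical argument for Darbo's fixed point theorem. Note, however, that the paper does not supply its own proof of this statement: Theorem~\ref{D} is quoted as a known result from Bana\'{s} and Rzepka~\cite{bib1} (see also the monograph~\cite{bibM1B}) and is used as a tool in the proof of Theorem~\ref{L2}. There is therefore nothing in the paper to compare your argument against; what you have written is precisely the standard proof one finds in the literature, via the nested sequence $M_{n+1}=\overline{Conv\,T(M_{n})}$, axiom $5^{0}$, and Schauder's theorem on the compact convex intersection.
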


We consider the Banach space $l^{\infty }$ of all real bounded sequences $%
x\colon N_{0}\rightarrow R$ equipped with the standard supremum norm, i.e. 
\begin{equation*}
\Vert x\Vert =\sup\limits_{n\in {\mathbb{N}}_{0}}|x_{n}|\text{ for }x\in
l^{\infty }.
\end{equation*}%
Let $X$ be a nonempty, bounded subset of $l^{\infty }$, $X_{n}=\left\{
x_{n}:x\in X\right\} $ (it means $X_{n}$ is a set of $n$--th terms of any
sequence belonging to $X$), and 
\begin{equation*}
diam\,\,X_{n}=\sup \left\{ \left\vert x_{n}-y_{n}\right\vert :x,y\in
X\right\} .
\end{equation*}%
We use a following measure of noncompactness in the space $l^{\infty }$ (see 
\cite{bibM1B}) 
\begin{equation*}
\mu (X)=\limsup_{n\rightarrow \infty }diam\,\,X_{n}.
\end{equation*}

\section{Existence result}

In this section, sufficient conditions for the existence of a bounded
solution of equation~\eqref{e0} are derived.

\begin{theorem}
\label{L2}Let the sequence $(\gamma _{n})$ of the ratio of odd positive
integers be such that 
\begin{equation}
\gamma ^{+}=\sup_{n\in {\mathbb{N}}}\gamma _{n}\leq 1.  \label{gamma}
\end{equation}%
Let a number $\alpha \geq 1$ be fixed and let $k$ be a fixed positive
integer. Assume that 
\begin{equation*}
f:{\mathbb{R}}\rightarrow {\mathbb{R}}\text{ is a locally Lipschitz function,%
}
\end{equation*}%
the sequences $r:N_{0}\rightarrow R\setminus \{0\}$, $a,q:N_{0}\rightarrow R$
satisfy 
\begin{equation}
\sum\limits_{n=0}^{\infty }\left[ \left\vert \frac{1}{r_{n}}\right\vert
\sum\limits_{i=n}^{\infty }\left\vert a_{i}\right\vert \right] ^{\frac{1}{%
\gamma ^{+}}}<+\infty ,  \label{z2}
\end{equation}%
and 
\begin{equation}
\sum\limits_{n=0}^{\infty }\left[ \left\vert \frac{1}{r_{n}}\right\vert
\sum\limits_{i=n}^{\infty }\left\vert q_{i}\right\vert \right] ^{\frac{1}{%
\gamma ^{+}}}\leq +\infty .  \label{z22}
\end{equation}%
Assume that the sequence $p:N_{0}\rightarrow R\setminus \{0\}$ satisfies the
following condition 
\begin{equation}
-1<\liminf\limits_{n\rightarrow \infty }p_{n}\leq
\limsup\limits_{n\rightarrow \infty }p_{n}<1.  \label{z3}
\end{equation}%
Assume additionally that 
\begin{equation}
\sum\limits_{i=0}^{\infty }\left\vert a_{i}\right\vert <+\infty
,\,\,\sum\limits_{i=0}^{\infty }\left\vert q_{i}\right\vert <+\infty .
\label{add_series}
\end{equation}%
Then, there exists a bounded solution $x:N_{k}\rightarrow R$ of equation~%
\eqref{e0}.
\end{theorem}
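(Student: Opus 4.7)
My plan is to rewrite (\ref{e0}) as a fixed-point equation for a suitable operator on a closed ball in $l^{\infty}$, and then apply Darbo's theorem (Theorem~\ref{D}). Summing (\ref{e0}) from $n$ to $\infty$ twice kills both $\Delta$'s; the hypothesis that $\gamma_n$ is the ratio of odd positive integers ensures that $u\mapsto u^{1/\gamma_n}$ is an unambiguous bijection of $\mathbb{R}$. Looking for a solution with $r_n(\Delta(x_n+p_n x_{n-k}))^{\gamma_n}\to 0$ and $x_n+p_n x_{n-k}\to 0$ at infinity, one arrives at the equivalent equation
\[
x_n=-p_n x_{n-k}-\sum_{j=n}^{\infty}\left(\frac{1}{r_j}\sum_{i=j}^{\infty}\bigl[q_i x_i^{\alpha}+a_i f(x_{i+1})\bigr]\right)^{1/\gamma_j}=:(Tx)_n.
\]
A bounded fixed point of $T$ is precisely a bounded solution of (\ref{e0}); extension to the smaller indices (if needed) is done by backward recursion from the equation itself.

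For the Darbo argument, fix $R>0$ and set $M_f(R):=\sup_{|y|\leq R}|f(y)|<\infty$, finite because $f$ is continuous on the compact interval $[-R,R]$. For any $x\in l^{\infty}$ with $\|x\|\leq R$ the modulus of the base of the $j$-th summand of $(Tx)_n$ is bounded by
\[
A_j(R):=\left|\frac{1}{r_j}\right|\left(R^{\alpha}\sum_{i=j}^{\infty}|q_i|+M_f(R)\sum_{i=j}^{\infty}|a_i|\right).
\]
By (\ref{add_series}) the tails $\sum_{i\geq j}|q_i|,\sum_{i\geq j}|a_i|\to 0$, so $A_j(R)\leq 1$ for $j$ beyond some $j_0$. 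For such $j$, because $1/\gamma_j\geq 1/\gamma^{+}\geq 1$, one has $A_j(R)^{1/\gamma_j}\leq A_j(R)^{1/\gamma^{+}}$. Conditions (\ref{z2})--(\ref{z22}) then provide a convergent majorant $\sum_{j\geq j_0}A_j(R)^{1/\gamma^{+}}$, hence absolute convergence of the series defining $(Tx)_n$ and uniform vanishing of its tail on $M:=\{x\in l^{\infty}:\|x\|\leq R\}$. Combined with $P:=\limsup_{n}|p_n|<1$ from (\ref{z3}), one can choose $R$ and a starting index $n_0$ large enough that $T(M)\subset M$. Continuity of $T$ on $M$ follows from the local Lipschitz property of $f$, smoothness of $y\mapsto y^{\alpha}$ on $[-R,R]$, continuity of $u\mapsto u^{1/\gamma_j}$ on $\mathbb{R}$, and a discrete dominated-convergence argument using the majorant $A_j(R)^{1/\gamma^{+}}$. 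Finally, for any nonempty $X\subset M$ and $x,y\in X$,
\[
|(Tx)_n-(Ty)_n|\leq|p_n|\,|x_{n-k}-y_{n-k}|+2\sum_{j=n}^{\infty}A_j(R)^{1/\gamma_j},
\]
so passing to $\operatorname{diam}$ and $\limsup_{n\to\infty}$ yields $\mu(T(X))\leq P\,\mu(X)$ with $P<1$. Theorem~\ref{D} then produces the desired fixed point.

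The delicate point is the self-mapping step together with the uniform control of the $1/\gamma_j$-powers: one must simultaneously exploit $\gamma^{+}\leq 1$ (which lets one pass from $A_j^{1/\gamma_j}$ to the summable majorant $A_j^{1/\gamma^{+}}$, but only where $A_j\leq 1$), the summability conditions (\ref{z2})--(\ref{z22}), the extra summability (\ref{add_series}) forcing $A_j\leq 1$ eventually, and the asymptotic bound $\limsup|p_n|<1$; the non-asymptotic behaviour of $|p_n|$ and of the tail series is absorbed by choosing the starting index large and the radius $R$ appropriately. Every other ingredient — continuity, the noncompactness contraction estimate, and the invocation of Darbo — is then essentially routine given the preceding measure-theoretic framework.
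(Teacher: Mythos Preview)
Your outline is correct and follows the same overall route as the paper: define the operator
\[
(Tx)_n=-p_n x_{n-k}-\sum_{j\geq n}\Bigl(\tfrac{1}{r_j}\sum_{i\geq j}\bigl[q_i x_i^{\alpha}+a_i f(x_{i+1})\bigr]\Bigr)^{1/\gamma_j}
\]
on a closed ball in $l^\infty$, check self--mapping and continuity, apply Darbo, and recover the remaining finitely many terms by backward recursion using $p_n\neq 0$.

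The one genuine methodological difference is in Step~3. The paper obtains the Darbo contraction by first proving that $t\mapsto t^{1/\gamma_j}$, $t\mapsto t^{\alpha}$ and $f$ are Lipschitz on the relevant compact intervals, which gives a pointwise estimate of the form $|(Tx)_n-(Ty)_n|\leq c_1\,\mathrm{diam}\,X_n+c_2\,\mathrm{diam}\,X_{n+1}$ with $c_1+c_2<1$. Your estimate
\[
|(Tx)_n-(Ty)_n|\leq |p_n|\,|x_{n-k}-y_{n-k}|+2\sum_{j\geq n}A_j(R)^{1/\gamma_j}
\]
is cruder but sufficient: the second term is independent of $x,y$ and tends to $0$ as $n\to\infty$, so it contributes nothing to the $\limsup$ of the diameters and you land directly at $\mu(T(X))\leq P\,\mu(X)$. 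This is a legitimate shortcut; the Lipschitz estimates the paper develops are really only needed for continuity (and are reused later in the stability/uniqueness sections), so your dominated--convergence argument for continuity together with the crude bound for $\mu$ covers Theorem~\ref{L2} with slightly less work. Two small points to tidy up when you write it out in full: (i) make explicit that $(Tx)_n:=x_n$ for $n<n_0$, so that $T$ genuinely maps the ball $M\subset l^\infty$ into itself; and (ii) the phrase ``choose $R$ and $n_0$ large enough'' is misleading, since enlarging $R$ enlarges $A_j(R)$ --- the correct order is to fix any $R>0$ first and then choose $n_0$ so that the tail is below $(1-P)R$ and $|p_n|\leq P$ for $n\geq n_0$.
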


\begin{proof}
Condition \eqref{z3} implies that there exist $n_{1}\in \ N_{0}$ and a
constant $P\in \lbrack 0,1)$ such that 
\begin{equation}
\left\vert p_{n}\right\vert \leq P<1,\text{ for }n\geq n_{1}.  \label{z5}
\end{equation}%
Recalling that the remainder of a series is the difference between the sum
of a series and its partial sum, we denote by $\alpha _{n}$ and by $\beta
_{n}$ remainders of both series in \eqref{z2} and \eqref{z22}. This means
that 
\begin{equation*}
\alpha _{n}=\sum\limits_{j=n}^{\infty }\left[ \left\vert \frac{1}{r_{j}}%
\right\vert \sum\limits_{i=j}^{\infty }\left\vert a_{i}\right\vert \right] ^{%
\frac{1}{\gamma _{i}}}\text{ and }\,\,\beta _{n}=\sum\limits_{j=n}^{\infty }%
\left[ \left\vert \frac{1}{r_{j}}\right\vert \sum\limits_{i=j}^{\infty
}\left\vert q_{i}\right\vert \right] ^{\frac{1}{\gamma _{i}}}
\end{equation*}%
and by \eqref{z2} and \eqref{z22}, we see that 
\begin{equation*}
\lim\limits_{n\rightarrow \infty }\alpha _{n}=0\,\,\text{ and }%
\lim\limits_{n\rightarrow \infty }\beta _{n}=0.
\end{equation*}%
Fix any number $d>0$. There exists a constant $M_{d}>0$ such that $%
\left\vert f\left( x\right) \right\vert \leq M_{d}$ for all $x\in \left[ -d,d%
\right] $. Set 
\begin{equation}
M^{\ast }=\max \left\{ M_{d},d^{\alpha }\right\} .  \label{max}
\end{equation}%
Chose a constant $C$ such that 
\begin{equation*}
0<C\leq \frac{d-Pd}{(2M^{\ast })^{\frac{1}{\gamma ^{+}}}}.
\end{equation*}%
Let us define sequence $(m_{n})$ as follows 
\begin{equation*}
m_{n}=\max \{\left\vert a_{n}\right\vert ,\left\vert q_{n}\right\vert \}.
\end{equation*}%
From the above and by conditions \eqref{z2}, \eqref{z22} and %
\eqref{add_series}, there exists a positive integer $n_{2}$ such that 
\begin{equation}
\alpha _{n}\leq C,\,\,\beta _{n}\leq C,\,\,\sum\limits_{i=n}^{\infty
}\left\vert a_{i}\right\vert \leq 1\text{ and }\sum\limits_{i=n}^{\infty
}\left\vert q_{i}\right\vert \leq 1  \label{z8}
\end{equation}%
for $n\geq n_{2}$, $n\in N_{n_{2}}\colon =\left\{
n_{2},n_{2}+1,n_{2}+2,\dots \right\} $. Define also $N_{n_{3}}\colon
=\left\{ n_{3},n_{3}+1,n_{3}+2,\dots \right\} $ and $n_{3}=\max \left\{
n_{1},n_{2}\right\} $.

Define set $B$ as follows 
\begin{equation}
B\colon =\left\{ (x_{n})_{n=0}^{\infty }:\left\vert x_{n}\right\vert \leq d,%
\text{ for }n\in {\mathbb{N}}_{0}\right\} .  \label{z9}
\end{equation}%
Observe that $B$ is a nonempty, bounded, convex and closed subset of $%
l^{\infty }$.

Define a mapping $T\colon B\rightarrow l^{\infty }$ as follows 
\begin{equation}
(Tx)_{n}=%
\begin{cases}
x_{n}, & \text{for any}\ 0\leq n<n_{3} \\ 
-p_{n}x_{n-k}- & \sum\limits_{j=n}^{\infty }\left[ \frac{1}{r_{j}}%
\sum\limits_{i=j}^{\infty }\left( a_{i}f(x_{i+1})+q_{i}x_{i}^{\alpha
}\right) \right] ^{\frac{1}{\gamma _{j}}},\text{ for }\ n\geq n_{3}.%
\end{cases}
\label{z10}
\end{equation}%
We see that $T$ is a continuous operator. We will prove that the mapping $T$
has a fixed point in $B$. This proof will follow in several subsequent steps.

\bigskip Step 1. Firstly, we show that $T(B)\subset B$.

If $x\in B$, then by~\eqref{z10}, \eqref{z5}, \eqref{z9}, \eqref{max} and %
\eqref{z8}, we have for $n\geq n_{3}$ 
\begin{equation*}
\begin{array}{l}
\left\vert (Tx)_{n}\right\vert \,\,\leq \left\vert p_{n}\right\vert
\left\vert x_{n-k}\right\vert +\left\vert \sum\limits_{j=n}^{\infty }\left[ 
\frac{1}{r_{j}}\sum\limits_{i=j}^{\infty }\left(
a_{i}f(x_{i+1})+q_{i}x_{i}^{\alpha }\right) \right] ^{\frac{1}{\gamma _{j}}%
}\right\vert \bigskip  \\ 
\leq \left\vert p_{n}\right\vert \left\vert x_{n-k}\right\vert
+\sum\limits_{j=n}^{\infty }\left[ \left\vert \frac{1}{r_{j}}%
\sum\limits_{i=j}^{\infty }\left( a_{i}f(x_{i+1})+q_{i}x_{i}^{\alpha
}\right) \right\vert \right] ^{\frac{1}{\gamma _{j}}}\bigskip  \\ 
\leq \left\vert p_{n}\right\vert \left\vert x_{n-k}\right\vert
+\sum\limits_{j=n}^{\infty }\left[ \left\vert \frac{1}{r_{j}}\right\vert
\sum\limits_{i=j}^{\infty }\left( \left\vert a_{i}\right\vert \left\vert
f(x_{i+1})\right\vert +\left\vert q_{i}\right\vert \left\vert
x_{i}\right\vert ^{\alpha }\right) \right] ^{\frac{1}{\gamma _{j}}}\bigskip 
\end{array}%
\end{equation*}%
\begin{equation*}
\begin{array}{l}
\leq \left\vert p_{n}\right\vert \left\vert x_{n-k}\right\vert
+\sum\limits_{j=n}^{\infty }\left[ M^{\ast }\left\vert \frac{1}{r_{j}}%
\right\vert \sum\limits_{i=j}^{\infty }\left( \left\vert a_{i}\right\vert
+\left\vert q_{i}\right\vert \right) \right] ^{\frac{1}{\gamma _{j}}%
}\bigskip  \\ 
\leq \left\vert p_{n}\right\vert \left\vert x_{n-k}\right\vert
+\sum\limits_{j=n}^{\infty }\left[ 2M^{\ast }\left\vert \frac{1}{r_{j}}%
\right\vert \sum\limits_{i=j}^{\infty }m_{i}\right] ^{\frac{1}{\gamma _{j}}%
}\bigskip  \\ 
\leq \left\vert p_{n}\right\vert \left\vert x_{n-k}\right\vert
+\sum\limits_{j=n}^{\infty }\left[ 2M^{\ast }\left\vert \frac{1}{r_{j}}%
\right\vert \sum\limits_{i=j}^{\infty }m_{i}\right] ^{\frac{1}{\gamma ^{+}}%
}\bigskip  \\ 
\leq \left\vert p_{n}\right\vert \left\vert x_{n-k}\right\vert +(2M^{\ast
})^{\frac{1}{\gamma ^{+}}}\sum\limits_{j=n}^{\infty }\left[ \left\vert \frac{%
1}{r_{j}}\right\vert \sum\limits_{i=j}^{\infty }m_{i}\right] ^{\frac{1}{%
\gamma ^{+}}}\bigskip  \\ 
\leq Pd+(2M^{\ast })^{\frac{1}{\gamma ^{+}}}\max \left\{ \alpha _{n},\beta
_{n}\right\} \leq Pd+(2M^{\ast })^{\frac{1}{\gamma ^{+}}}C.%
\end{array}%
\end{equation*}%
From the above, we obtain 
\begin{equation}
\left\vert (Tx)_{n}\right\vert \leq Pd+(2M^{\ast })^{\frac{1}{\gamma ^{+}}}%
\frac{d-Pd}{(2M^{\ast })^{\frac{1}{\gamma ^{+}}}}=d  \label{calc}
\end{equation}%
for $n\in N_{n_{3}}$ and obviously $\left\vert (Tx)_{n}\right\vert
=\left\vert x_{n}\right\vert \leq d$ when $n<n_{3}$.

\bigskip Step 2. $T$\ is continuous.

By assumption (\ref{add_series}) and by definition of $B$ there exists a
constant $c>0$ such that 
\begin{equation*}
\sum\limits_{i=j}^{\infty }\left\vert a_{i}f(x_{i+1})+q_{i}x_{i}^{\alpha
}\right\vert \leq c,\,\,\,j\in \mathbb{N}_{n_{3}},
\end{equation*}%
for all $x\in B$. Since $t\rightarrow t^{1/\gamma _{n}}$ is differentiable
for each fixed $n$, it is Lipschitz on closed and bounded intervals. By
Lagrange Mean Value Theorem, the Lipschitz constant satisfies the following
estimation $L_{\gamma ^{+}}\leq \frac{1}{\gamma ^{+}}d^{\frac{1}{\gamma ^{+}}%
-1}$. Thus we have 
\begin{equation*}
\left\vert t^{1/\gamma _{n}}-s^{1/\gamma _{n}}\right\vert \leq L_{\gamma
^{+}}\left\vert t-s\right\vert \text{ for all }t,s\in \left[ -c,c\right] .
\end{equation*}%
Since $f$ is locally Lipschitz it is Lipschitz on $\left[ -d,d\right] $. So
there is a constant $L_{d}>0$ such that 
\begin{equation*}
\left\vert f\left( x\right) -f\left( y\right) \right\vert \leq
L_{d}\left\vert x-y\right\vert 
\end{equation*}%
for all $x,y\in \left[ -d,d\right] $. Since $x\rightarrow x^{\alpha }$ is
also Lipschitz on $\left[ -d,d\right] $, there is a constant $L_{\alpha }$
such that 
\begin{equation*}
\left\vert x^{\alpha }-y^{\alpha }\right\vert \leq L_{\alpha }\left\vert
x-y\right\vert \text{ for all }x,y\in \left[ -d,d\right] .
\end{equation*}

Let $y^{(p)}$ be a sequence in $B$ such that $\left\Vert
x^{(p)}-x\right\Vert \rightarrow 0$ as $p\rightarrow \infty $. Since $B$ is
closed, $x\in B$. Now we will examine the convergence of $Ty^{(p)}$. The
reasoning is by definition of $T$ and is similar to the arguments which lead
to formula \eqref{z10}. By \eqref{z2}, \eqref{z22}, \eqref{calc}, we get for 
$n\geq n_{3}$ 
\begin{equation*}
\begin{array}{l}
\left\vert (Tx)_{n}-(Ty^{(p)})_{n}\right\vert \,\,\leq \left\vert
p_{n}\right\vert \left\vert x_{n-k}-y_{n-k}^{(p)}\right\vert \bigskip  \\ 
+\sum\limits_{j=n}^{\infty }\left\vert \frac{1}{r_{j}}\right\vert ^{\frac{1}{%
\gamma ^{+}}}\left\vert \left[ \sum\limits_{i=j}^{\infty }\left(
a_{i}f(x_{i+1})+q_{i}\left( x_{i}\right) ^{\alpha }\right) \right] ^{\frac{1%
}{\gamma _{j}}}-\left[ \sum\limits_{i=j}^{\infty }\left(
a_{i}f(y_{i+1}^{(p)})+q_{i}\left( y_{i}^{\left( p\right) }\right) ^{\alpha
}\right) \right] ^{\frac{1}{\gamma _{j}}}\right\vert 
\end{array}%
\end{equation*}%
\begin{equation*}
\begin{array}{l}
\leq \,\,\,\left\vert p_{n}\right\vert \left\vert
x_{n-k}-y_{n-k}^{(p)}\right\vert  \\ 
+\sum\limits_{j=n}^{\infty }\left\vert \frac{1}{r_{j}}\right\vert ^{\frac{1}{%
\gamma ^{+}}}L_{\gamma ^{+}}\left\vert \sum\limits_{i=j}^{\infty
}a_{i}f(x_{i+1})+\sum\limits_{i=j}^{\infty }q_{i}\left( x_{i}\right)
^{\alpha }-\sum\limits_{i=j}^{\infty }a_{i}f(y_{i+1}^{\left( p\right)
})-\sum\limits_{i=j}^{\infty }q_{i}\left( y_{i}^{\left( p\right) }\right)
^{\alpha }\right\vert \bigskip 
\end{array}%
\end{equation*}%
\begin{equation*}
\begin{array}{l}
\leq \left\vert p_{n}\right\vert \left\vert x_{n-k}-y_{n-k}^{(p)}\right\vert
+L_{\gamma ^{+}}\sum\limits_{j=n}^{\infty }\left\vert \frac{1}{r_{j}}%
\right\vert ^{\frac{1}{\gamma ^{+}}}\sum\limits_{i=j}^{\infty }\left\vert
a_{i}\right\vert \left\vert f(x_{i+1})-f(y_{i+1}^{\left( p\right)
})\right\vert \bigskip  \\ 
+L_{\gamma }\sum\limits_{j=n}^{\infty }\left\vert \frac{1}{r_{j}}\right\vert
^{\frac{1}{\gamma }}\sum\limits_{i=j}^{\infty }\left\vert q_{i}\right\vert
\left\vert \left( x_{i}\right) ^{\alpha }-\left( y_{i}^{\left( p\right)
}\right) ^{\alpha }\right\vert \bigskip \leq \left\vert p_{n}\right\vert
\left\vert x_{n-k}-y_{n-k}^{(p)}\right\vert \bigskip  \\ 
+L_{\gamma ^{+}}L_{d}\sum\limits_{j=n}^{\infty }\left\vert \frac{1}{r_{j}}%
\right\vert ^{\frac{1}{\gamma ^{+}}}\sum\limits_{i=j}^{\infty }\left\vert
a_{i}\right\vert \left\vert x_{i+1}-y_{i+1}^{\left( p\right) }\right\vert
+L_{\gamma ^{+}}L_{\alpha }\sum\limits_{j=n}^{\infty }\left\vert \frac{1}{%
r_{j}}\right\vert ^{\frac{1}{\gamma ^{+}}}\sum\limits_{i=j}^{\infty
}\left\vert q_{i}\right\vert \left\vert x_{i}-y_{i}^{\left( p\right)
}\right\vert \bigskip 
\end{array}%
\end{equation*}%
\begin{equation*}
\begin{array}{l}
\leq \left( \left\vert p_{n}\right\vert +L_{\gamma
^{+}}L_{d}\sum\limits_{j=n}^{\infty }\left\vert \frac{1}{r_{j}}\right\vert ^{%
\frac{1}{\gamma ^{+}}}\sum\limits_{i=j}^{\infty }\left\vert a_{i}\right\vert
+L_{\gamma ^{+}}L_{\alpha }\sum\limits_{j=n}^{\infty }\left\vert \frac{1}{%
r_{j}}\right\vert ^{\frac{1}{\gamma ^{+}}}\sum\limits_{i=j}^{\infty
}\left\vert q_{i}\right\vert \right) \left\Vert y^{(p)}-x\right\Vert .%
\end{array}%
\end{equation*}%
$\left\vert (Tx)_{n}-(Ty^{(p)})_{n}\right\vert \,=0$ for $n<n_{3}$.

Thus 
\begin{equation*}
\lim\limits_{p\rightarrow \infty }\left\Vert Ty^{(p)}-Tx\right\Vert =0\text{
as }\lim\limits_{p\rightarrow \infty }\left\Vert y^{(p)}-x\right\Vert =0.
\end{equation*}%
This means that $T$ is continuous.

\bigskip Step 3. Comparison of the measure of noncompactness

Now, we need to compare a measure of noncompactness of any subset $X$ of $B$
and $T(X)$. Let us fix any nonempty set $X\subset B$. Take any sequences $%
x,y\in X$. Following the same calculations which led to the continuity of
the operator $T$ we see that 
\begin{equation*}
\left\vert (Tx)_{n}-(Ty)_{n}\right\vert \leq \left( \left\vert
p_{n}\right\vert +L_{\gamma ^{+}}L_{\alpha }\beta _{n}\right) \left\vert
x_{n}-y_{n}\right\vert +L_{\gamma ^{+}}L_{d}\alpha _{n}\left\vert
x_{n+1}-y_{n+1}\right\vert 
\end{equation*}%
Taking sufficiently large $n$ we get 
\begin{equation*}
\left\vert p_{n}\right\vert +L_{\gamma ^{+}}L_{d}\alpha _{n}\leq c_{1}<\frac{%
1+P}{2}
\end{equation*}%
and 
\begin{equation*}
L_{\gamma ^{+}}L_{\alpha }\beta _{n}\leq c_{2}<\frac{1-P}{2}
\end{equation*}%
since $\alpha _{n}\rightarrow 0$ and $\beta _{n}\rightarrow 0$, where $%
c_{1},c_{2}$ are some constants. We see that 
\begin{equation*}
diam\,\,(T(X))_{n}\leq c_{1}diam\,\,X_{n}+c_{2}diam\,\,X_{n+1}.
\end{equation*}%
This yields by the properties of the upper limit that 
\begin{equation*}
\limsup_{n\rightarrow \infty }diam\,\,(T(X))_{n}\leq
c_{1}\,\limsup_{n\rightarrow \infty
}diam\,\,X_{n}+c_{2}\limsup_{n\rightarrow \infty }diam\,\,X_{n+1}.
\end{equation*}%
From above, for any $X\subset B$ we have $\mu (T(X))\leq \left(
c_{1}\,+c_{2}\right) \mu (X)$.

\bigskip Step 4. Relation between fixed points and solutions.

By Theorem~\ref{D} we conclude that $T$ has a fixed point in the set $B$. It
means that there exists $x\in B$ such that 
\begin{equation*}
x_{n}=(Tx)_{n}.
\end{equation*}%
Thus 
\begin{equation*}
x_{n}=-p_{n}x_{n-k}-\sum\limits_{j=n}^{\infty }\left[ \frac{1}{r_{j}}%
\sum\limits_{i=j}^{\infty }\left( a_{i}f(x_{i+1})+q_{i}x_{i}^{\alpha
}\right) \right] ^{\frac{1}{\gamma _{j}}},\text{ for }\,\,n\in {\mathbb{N}}%
_{n_{3}}.
\end{equation*}%
We will examine a correspondence between fixed points of $T$\ and solutions
to \eqref{e0}. We apply operator $\Delta $\ to both sides of the following
equation 
\begin{equation*}
x_{n}+p_{n}x_{n-k}=-\sum\limits_{j=n}^{\infty }\left[ \frac{1}{r_{j}}%
\sum\limits_{i=j}^{\infty }\left( a_{i}f(x_{i+1})+q_{i}x_{i}^{\alpha
}\right) \right] ^{\frac{1}{\gamma _{j}}},\,\,\,n\in {\mathbb{N}}_{n_{3}}.
\end{equation*}%
We find that 
\begin{equation*}
\Delta (x_{n}+p_{n}x_{n-k})=\left[ \frac{1}{r_{n}}\sum\limits_{i=n}^{\infty
}\left( a_{i}f(x_{i+1})+q_{i}x_{i}^{\alpha }\right) \right] ^{\frac{1}{%
\gamma _{n}}},\,\,\,n\in {\mathbb{N}}_{n_{3}}.
\end{equation*}%
and next 
\begin{equation*}
r_{n}\left( \Delta (x_{n}+p_{n}x_{n-k})\right) ^{\gamma
_{n}}=\sum\limits_{i=n}^{\infty }\left( a_{i}f(x_{i+1})+q_{i}x_{i}^{\alpha
}\right) ,\,\,\,n\in {\mathbb{N}}_{n_{3}}.
\end{equation*}%
Taking operator $\Delta $ again to both sides of the above equation we
obtain 
\begin{equation*}
\Delta \left( r_{n}\left( \Delta (x_{n}+p_{n}x_{n-k})\right) ^{\gamma
_{n}}\right) =-a_{n}f(x_{n+1})-q_{n}x_{n}^{\alpha },\,\,\,n\in {\mathbb{N}}%
_{n_{3}}.
\end{equation*}%
Hence, we get equation~\eqref{e0}.\ Sequence $x$, which is a fixed point of
mapping $T$, is a bounded sequence which fulfills equation~\eqref{e0} for
large $n$. If $n_{3}\leq k$\ the proof is ended. If $n_{3}>k$\ we find
previous $n_{3}-k+1$\ terms of sequence $x$\ by formula 
\begin{equation*}
x_{n-k+l}=\frac{1}{p_{n+l}}\left( -x_{n+l}+\sum\limits_{j={n+l}}^{\infty }%
\left[ \frac{1}{r_{j}}\sum\limits_{i=j}^{\infty }\left(
a_{i}f(x_{i+1})+q_{i}x_{i}^{\alpha }\right) \right] ^{\frac{1}{\gamma _{j}}%
}\right) ,
\end{equation*}%
where $l\in \left\{ 0,1,2,\dots ,k-1\right\} $, which result leads directly
from~\eqref{e0}. It means that equation~\eqref{e0} has at least one bounded
solution $x:N_{k}\rightarrow R$. This completes the proof.
\end{proof}

One remark is in order as concerns the proof.

\textbf{Remark. }\textit{We see that cannot go beyond }$n=k$\textit{\ in
this iteration, so we do not use the whole fixed point solution. We also
recall that a fixed point of operator }$T$\textit{\ does not provide a
solution but the solution is obtained through a fixed point method combined
with backward iteration. When we do not have dependence on }$k$\textit{\ in
the main equation, we have direct and obvious correspondence between fixed
point of }$T$\textit{\ and solutions to \eqref{e0}.}

\section{A special type of stability}

In this section we derive sufficient conditions for the existence of some
type of a stable solution of equation~\eqref{e0}. We describe the stability
which we mean in the content of the theorem.

\begin{theorem}
\label{T1}Let the sequence $(\gamma _{n})$ of the ratio of odd positive
integers be such that \eqref{gamma} holds. Assume that $\alpha \geq 1$ is
fixed. Let $k$ be a fixed positive integer. Assume that 
\begin{equation*}
f:{\mathbb{R}}\rightarrow {\mathbb{R}}\text{ is a locally Lipschitz function}
\end{equation*}%
and that conditions \eqref{z2}--\eqref{add_series} hold. Then equation~%
\eqref{e0} has at least one solution $x:N_{k}\rightarrow R$ with the
following stability property: given any other bounded solution $%
y:N_{k}\rightarrow R$ and $\varepsilon >0$ there exists $T>$ $n_{3}$ such
that for every $t\geq T$ the following inequality holds 
\begin{equation*}
\left\vert x(t)-y(t)\right\vert \leq \varepsilon .
\end{equation*}
\end{theorem}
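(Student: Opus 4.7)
The plan is to choose $x$ to be the bounded solution produced by Theorem~\ref{L2} --- which by construction is a fixed point of the operator $T$ from \eqref{z10} on the closed ball $B$ of \eqref{z9} --- and then to compare it with any other bounded solution via the measure-of-noncompactness estimate established in Step~3 of the proof of Theorem~\ref{L2}. Given an arbitrary bounded solution $y$ of \eqref{e0}, I would first enlarge $d$ so that $\|y\|_{\infty}\le d$ and correspondingly increase $n_3$; since $P$, $M^{\ast}$, $L_d$, $L_{\alpha}$, $L_{\gamma^+}$, and the tails $\alpha_n$, $\beta_n$ depend only on the radius and on the tails of the prescribed series, all of the estimates of the proof of Theorem~\ref{L2} survive this enlargement and both $x$ and $y$ lie in the resulting ball $B$.

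The crucial intermediate step is to show that every bounded solution of \eqref{e0} satisfies the integral representation
\begin{equation*}
y_n + p_n y_{n-k} = -\sum_{j=n}^{\infty}\left[\frac{1}{r_j}\sum_{i=j}^{\infty}\bigl(a_i f(y_{i+1})+q_i y_i^{\alpha}\bigr)\right]^{1/\gamma_j},\qquad n\ge n_3,
\end{equation*}
so that $y$ is itself a fixed point of $T$. To obtain this, one sums \eqref{e0} from $n$ to $N$ and passes to the limit $N\to\infty$: by boundedness of $y$, local boundedness of $f$, and \eqref{add_series}, the tail series $\sum_{i=n}^{\infty}(a_i f(y_{i+1})+q_i y_i^{\alpha})$ converges absolutely, which in turn forces $r_N(\Delta(y_N+p_N y_{N-k}))^{\gamma_N}$ to converge to some limit $L$. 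A second summation, combined with boundedness of $y_n+p_n y_{n-k}$ (guaranteed by \eqref{z3}), is then used to force $L=0$ and $\lim_n (y_n+p_n y_{n-k})=0$; the constraint $\gamma^+\le 1$ together with \eqref{z2} and \eqref{z22} serve to rule out nonzero asymptotic values. Establishing these two vanishing-at-infinity properties is the main technical obstacle of the proof and is where the specific structure of \eqref{e0} is decisively exploited.

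Once this representation is available, the conclusion follows cleanly from the measure-of-noncompactness machinery already set up. Form the two-element set $X=\{x,y\}\subset B$. Since $x$ and $y$ are both fixed points of $T$, one has $T(X)=X$, whence $\mu(T(X))=\mu(X)$. On the other hand, Step~3 of the proof of Theorem~\ref{L2} supplies $\mu(T(X))\le (c_1+c_2)\mu(X)$ with $c_1+c_2<1$, and the two together force $\mu(X)=0$. Because $X_n=\{x_n,y_n\}$ has $\operatorname{diam} X_n = |x_n-y_n|$, this reads $\limsup_{n\to\infty}|x_n-y_n|=0$, so for every $\varepsilon>0$ there exists an index $N_{\varepsilon}>n_3$ such that $|x_n-y_n|\le\varepsilon$ for all $n\ge N_{\varepsilon}$, which is exactly the stability property asserted in the theorem.
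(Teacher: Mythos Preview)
Your overall strategy coincides with the paper's: both arguments end by showing $\limsup_{n\to\infty}|x_n-y_n|=0$ via the contraction-type estimate already obtained in the proof of Theorem~\ref{L2}. The paper does this by setting $l=\limsup_n|x_n-y_n|$ and deriving $l\le\vartheta\,l$ with $\vartheta<1$ from \eqref{arg}; your device of applying the measure of noncompactness to the two-point set $X=\{x,y\}$ and invoking $\mu(T(X))\le(c_1+c_2)\mu(X)$ is exactly the same computation in different clothing.

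You are more scrupulous than the paper on one point. The paper simply writes $|x_n-y_n|=|(Tx)_n-(Ty)_n|$, silently assuming that any bounded solution $y$ of \eqref{e0} satisfies the summed representation, i.e.\ is a fixed point of $T$ for large $n$. You correctly single this out as the crucial intermediate step and sketch the two-fold summation needed to recover it.

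That said, your sketch does not close the gap. Summing \eqref{e0} once indeed gives $r_N(\Delta z_N)^{\gamma_N}\to L$ with $z_n=y_n+p_ny_{n-k}$, but conditions \eqref{z2}--\eqref{add_series} together with $\gamma^+\le 1$ do not by themselves force $L=0$: if $a_i$ and $q_i$ vanish for large $i$, these conditions impose no constraint on $r_j$, and with $|r_j|$ growing fast enough one can have $L\ne 0$ while $z$ stays bounded. Even granting $L=0$, the second summation only yields $z_n\to c$ for some constant $c$; nothing in the hypotheses rules out $c\ne 0$, and a bounded solution with $c\ne 0$ is not a fixed point of the operator $T$ defined in \eqref{z10}. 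So the phrase ``the constraint $\gamma^+\le 1$ together with \eqref{z2} and \eqref{z22} serve to rule out nonzero asymptotic values'' is not justified as stated. This is a lacuna the paper's own proof shares---it never verifies $y=(Ty)$---so your proposal is at least as complete as the original, but the obstacle you identify remains unresolved in both.
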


\begin{proof}
From Theorem~\ref{L2}, equation~\eqref{e0} has at least one bounded solution 
$x:N_{0}\rightarrow R$ which can be rewritten in the form 
\begin{equation*}
x_{n}=(Tx)_{n},\text{ for }n\geq n_{3}
\end{equation*}%
where mapping $T$ is defined by~\eqref{z10}. Let $y$ be another bounded
solution of equation~\eqref{e0}. Then there exists a constant $d$ such that $%
\sup_{t\in N_{0}}\left\vert x\left( t\right) -y(t)\right\vert \leq d$. Since 
$f$ is locally Lipschitz it is Lipschitz on $\left[ -d,d\right] $ with a
Lipschitz constant $L_{d}>0$. Similar arguments concern function $%
x\rightarrow x^{\alpha }$ which is also Lipschitz on $\left[ -d,d\right] $
with a Lipschitz constant $L_{\alpha }>0$. Following the steps of the proof
of Theorem \ref{L2} we see that using the fact that 
\begin{equation}
\begin{array}{l}
\left\vert x_{n}-y_{n}\right\vert =\left\vert (Tx)_{n}-(Ty)_{n}\right\vert
\,\bigskip  \\ 
\leq \left\vert p_{n}\right\vert \left\vert x_{n-k}-y_{n-k}\right\vert
+L_{\gamma ^{+}}L_{d}\sum\limits_{j=n}^{\infty }\left\vert \frac{1}{r_{j}}%
\right\vert ^{\frac{1}{\gamma ^{+}}}\sum\limits_{i=j}^{\infty }\left\vert
a_{i}\right\vert \left\vert x_{i+1}-y_{i+1}\right\vert \,\bigskip  \\ 
\,+L_{\gamma ^{+}}L_{\alpha }\sum\limits_{j=n}^{\infty }\left\vert \frac{1}{%
r_{j}}\right\vert ^{\frac{1}{\gamma ^{+}}}\sum\limits_{i=j}^{\infty
}\left\vert q_{i}\right\vert \left\vert x_{i}-y_{i}\right\vert .%
\end{array}
\label{arg}
\end{equation}%
Note that for $n$ large enough, say $n\geq n_{4}\geq n_{3}$ we have 
\begin{equation}
\vartheta :=\left\vert p_{n}\right\vert +L_{\gamma
^{+}}L_{d}\sum\limits_{j=n}^{\infty }\left( \left\vert \frac{1}{r_{j}}%
\right\vert \sum\limits_{i=j}^{\infty }\left\vert a_{i}\right\vert \right) ^{%
\frac{1}{\gamma ^{+}}}+L_{\gamma ^{+}}L_{\alpha }\sum\limits_{j=n}^{\infty
}\left( \left\vert \frac{1}{r_{j}}\right\vert \sum\limits_{i=j}^{\infty
}\left\vert q_{i}\right\vert \right) ^{\frac{1}{\gamma ^{+}}}<1.
\label{sssss}
\end{equation}%
Let us denote 
\begin{equation*}
\limsup_{n\rightarrow \infty }\left\vert x_{n}-y_{n}\right\vert =l,
\end{equation*}%
and observe that 
\begin{equation*}
\limsup_{n\rightarrow \infty }\left\vert x_{n}-y_{n}\right\vert
=\limsup_{n\rightarrow \infty }\left\vert x_{n-k}-y_{n-k}\right\vert
=\limsup_{n\rightarrow \infty }\left\vert x_{n+1}-y_{n+1}\right\vert .
\end{equation*}%
Thus from the above we have 
\begin{equation*}
l\leq \vartheta \cdot l.
\end{equation*}%
This means that $\limsup\limits_{n\rightarrow \infty }\left\vert
x_{n}-y_{n}\right\vert =0$. This completes the proof since for $\varepsilon
>0$ there exists $n_{4}\in N_{0}$ such that for every $n\geq n_{4}\geq n_{3}$
the following inequality holds 
\begin{equation*}
\left\vert x_{n}-y_{n}\right\vert \leq \varepsilon .
\end{equation*}
\end{proof}

\begin{example}
Let us consider the difference equation 
\begin{equation*}
\Delta \left\{ 3^{-\frac{1}{2n+1}}(-1)^{n}\left[ \Delta (x_{n}+\frac{1}{2}%
x_{n-2})\right] ^{\frac{1}{2n+1}}\right\} +\frac{1}{2^{n}}\left(
x_{n}\right) ^{5}+\frac{1}{2^{n}}(x_{n+1})^{3}=0.
\end{equation*}%
Here $\alpha =5,$ $p_{n}=\frac{1}{2}$, $k=2$, $r_{n}=3^{-\frac{1}{2n+1}%
}(-1)^{n}$, $\gamma _{n}=\frac{1}{2n+1}$, $a_{n}=\frac{1}{2^{n}}$, $q_{n}=%
\frac{1}{2^{n}}$, and $f(x)=x^{3}$. All assumptions of Theorem \ref{T1} are
satisfied with $\gamma ^{+}=1$ and $\lim\limits_{n\rightarrow \infty }p_{n}=%
\frac{1}{2}$. Hence, there exists a bounded solution of the above equation.
In fact, sequence $x_{n}=(-1)^{n}$ is one of such solutions.
\end{example}

In the following example we present the equation which coefficients fulfil
the assumptions of Theorem \ref{T1} and for which one unbounded solution is
known. With our method we get the second bounded solution.

\begin{example}
Let us consider the difference equation 
\begin{equation*}
\Delta \left[ (n-1)\Delta \left( x_{n}+\frac{1}{(n-1)^{2}(n-2)}%
x_{n-1}\right) \right] -\frac{4}{n^{2}(n-1)^{3}}(x_{n})^{3}
\end{equation*}%
\begin{equation*}
-\frac{1}{n(n+1)\sin \left[ n(n+1)\right] }\sin (x_{n+1})=0,n\geq 3.
\end{equation*}%
Here $p_{n}=\frac{1}{(n-1)^{2}(n-2)}$, $k=1$, $r_{n}=n-1$, $\alpha =3$, $%
\gamma _{n}=1$, $a_{n}=-\frac{1}{n(n+1)\sin \left[ n(n+1)\right] }$, $q_{n}=-%
\frac{4}{n^{2}(n-1)^{3}}$, and $f(x)=\sin x$. All assumptions of Theorem \ref%
{T1} are satisfied with $\lim\limits_{n\rightarrow \infty }p_{n}=0$.
Sequence $x_{n}=n(n-1)$ is a solution of the above equation. But this
solution is unbounded. On the virtue of Theorem \ref{T1} we know that there
exists another solution of the above equation which is bounded.
\end{example}

\section{Uniqueness}

In this section we undertake the question of the uniqueness of solutions to %
\eqref{e0}. It appears that assumptions leading to the type of a stability
which we considered also provide that any solution to \eqref{e0} is
eventually unique, i.e. there exists $n_{4}\geq n_{3}$ such that for any $%
n\geq n_{4}$ all solutions coincide.

\begin{theorem}
\label{Theo_uniqueness}With assumptions of Theorem \ref{T1} equation %
\eqref{e0} has a solution which is eventually unique, i.e. there exists $%
n_{4}\geq n_{3}$ such that for any $n\geq n_{4}$ and any bounded solution $y$
to \eqref{e0} it follows that $x_{n}=y_{n}.$
\end{theorem}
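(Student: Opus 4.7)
The plan is to mimic the proof of Theorem~\ref{T1}, replacing the limit superior used there with a supremum over a tail $[n_{4},\infty)$. Let $x$ be the solution furnished by Theorem~\ref{L2} and let $y$ be any other bounded solution of \eqref{e0}. Theorem~\ref{T1} first gives $|x_{n}-y_{n}|\to 0$. Since the constructed $x$ satisfies $x=Tx$, we have $x_{n}+p_{n}x_{n-k}\to 0$ and $r_{n}(\Delta(x_{n}+p_{n}x_{n-k}))^{\gamma_{n}}\to 0$, and the decay $|x_{n}-y_{n}|\to 0$ transfers both asymptotic conditions to $y$. Two successive backward summations of \eqref{e0} then show that $y_{n}=(Ty)_{n}$ for $n\geq n_{3}$, so that $x$ and $y$ are both fixed points of the operator $T$ from the proof of Theorem~\ref{L2}.

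Next, choose $n_{4}\geq n_{3}$ large enough that the coefficient $\vartheta$ from \eqref{sssss} stays strictly below $1$ for every $n\geq n_{4}$; this is possible by \eqref{z3}, \eqref{z5}, and the vanishing of the tails $\alpha_{n},\beta_{n}$. Running the same chain of inequalities that produced \eqref{arg}, now applied to $|x_{n}-y_{n}|=|(Tx)_{n}-(Ty)_{n}|$ and estimating every factor $|x_{i}-y_{i}|$ on the right-hand side by
\[
M:=\sup_{m\geq n_{4}-k}|x_{m}-y_{m}|,
\]
yields $|x_{n}-y_{n}|\leq\vartheta M$ for all $n\geq n_{4}$. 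Since $|x_{n}-y_{n}|\to 0$, the supremum $M$ is attained at some finite index $n^{*}$; provided $n^{*}\geq n_{4}$, the bound reads $M\leq\vartheta M$, forcing $M=0$, i.e.\ $x_{n}=y_{n}$ for every $n\geq n_{4}-k$.

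The delicate point, and the main obstacle, is guaranteeing that the argmax $n^{*}$ lies in $[n_{4},\infty)$ rather than in the initial window $[n_{4}-k,n_{4})$ where the contraction estimate is unavailable. The remedy is to enlarge $n_{4}$ (in a manner that may depend on $y$) by invoking the decay from Theorem~\ref{T1}: once $n_{4}$ is taken so large that $\sup_{n_{4}-k\leq m<n_{4}}|x_{m}-y_{m}|$ is itself dominated by $\vartheta M$, the two cases collapse into $M\leq\vartheta M$ and hence $M=0$. This is precisely the sense in which uniqueness is claimed to hold only eventually, and why the statement is quantified over a single threshold $n_{4}$ depending on the particular competing solution $y$.
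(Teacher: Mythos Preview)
Your core strategy---replace the $\limsup$ of Theorem~\ref{T1} by a tail supremum and derive a self-bounding inequality---is exactly the paper's. The paper's proof is two lines: set $l^{*}=\sup_{n\in\mathbb{N}_{n_{4}}}|x_{n}-y_{n}|$, read off $l^{*}\leq\vartheta\,l^{*}$ from \eqref{arg}, conclude $l^{*}=0$. It does not insert your opening paragraph (the identity $y=Ty$ is used tacitly, just as in the proof of Theorem~\ref{T1}) and it does not separate out the delayed index $n-k$ at all.

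Your attempt to treat the delayed index is where a genuine gap appears. You propose enlarging $n_{4}$ until
\[
\sup_{n_{4}-k\leq m<n_{4}}|x_{m}-y_{m}|\leq\vartheta M,\qquad M=\sup_{m\geq n_{4}-k}|x_{m}-y_{m}|.
\]
But $M$ moves with $n_{4}$, and the window supremum is always $\leq M$ by definition; nothing forces the ratio below $1$. If, say, $|x_{m}-y_{m}|$ were strictly decreasing and positive, then for \emph{every} choice of $n_{4}$ the supremum $M$ would be attained at the left endpoint $m=n_{4}-k$, inside the forbidden window, so the window supremum equals $M$ and your inequality fails. The decay $|x_{m}-y_{m}|\to 0$ from Theorem~\ref{T1} tells you $M\to 0$ as $n_{4}\to\infty$, not that $M=0$ for some finite $n_{4}$; hence it cannot push $n^{*}$ past $n_{4}$. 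The remedy, as written, does not close the case $n^{*}<n_{4}$. (The paper's terse argument does not confront this point either: it silently bounds $|x_{n-k}-y_{n-k}|$ by $l^{*}$ as though $n-k\geq n_{4}$.)
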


\begin{proof}
We proceed as in the proof of Theorem \ref{T1} with same notation.

We have that $0<\vartheta <1$ for $n\geq n_{4}$, where $\vartheta $ is defined by \eqref{sssss}. Let us denote 
\begin{equation*}
\sup\limits_{n\in {\mathbb{N}}_{n_4}}\left\vert x_{n}-y_{n}\right\vert =l^*.
\end{equation*}%
Hence, from \eqref{arg}, we get 
\begin{equation*}
l^* \leq \vartheta \cdot l^*.
\end{equation*}%
This means that $\sup\limits_{n\geq n_{4}}\left\vert x_{n}-y_{n}\right\vert
=0$ and $x_{n}=y_{n}$ for $n\geq n_{4}$.
\end{proof}

\section{Continuous dependence on parameters}

In this section we consider the case when the nonlinear term in the given
problem depends on a convergent sequence on parameters and we investigate
what happens when we approach a limit with this sequence. We assume that $%
g:R\rightarrow R$ is a continuous function and we consider a family of
problems for $m=1,2,...$ 
\begin{equation}
\Delta \left( r_{n}\left( \Delta \left( x_{n}+p_{n}x_{n-k}\right) \right)
^{\gamma _{n}}\right) +q_{n}x_{n}^{\alpha }+a_{n}f(x_{n+1})g\left(
u^{m}\right) =0.  \label{family}
\end{equation}%
Here a sequence $\left( u^{m}\right) \subset l^{\infty }$ is convergent to
some $u^{0}\in l^{\infty }$ and $u^{m}$ denotes the $m-th$ member of $\left(
u^{m}\right) $. Our result is as follows:

\begin{theorem}
\label{T1_dep}Let the sequence $(\gamma _{n})$ of the ratio of odd positive
integers be such that \eqref{gamma} holds. Let an integer $k$ be fixed and
let $\alpha =1$. Let $\left( u^{m}\right) \subset l^{\infty }$ be a
convergent sequence of parameters; let $u^{0}\in l^{\infty }$ be its limit.
Assume further for any $d_{1}>0$ that there exists a positive constant $%
D_{1} $ such that 
\begin{equation}
\left\vert f\left( x\right) g(s)-f\left( y\right) g(t)\right\vert \leq
D_{1}\left\vert s-t\right\vert  \label{growth_g}
\end{equation}%
for any $s,t\in R$ and $x,y\in \left[ -d_{1},d_{1}\right] $. Assume next
that for any $d_{2}>0$ that there exists a positive constant $D_{2}$ such
that 
\begin{equation*}
\left\vert f\left( x\right) g(s)-f\left( y\right) g(t)\right\vert \leq
D_{2}\left\vert x-y\right\vert
\end{equation*}%
for any $s,t\in \left[ -d_{2},d_{2}\right] $, $x,y\in R.$ Moreover assume
that conditions \eqref{z2}--\eqref{add_series} hold. Then for each $%
m=1,2,... $ equation~\eqref{family} has an one asymptotically stable
solution $x^{m}:N_{k}\rightarrow R$ corresponding to $u=u^{m}$. Also
equation~\eqref{family} has an asymptotically stable solution $%
x^{0}:N_{k}\rightarrow R$ corresponding to $u=u^{0}$. Moreover, sequence $%
\left( x^{m}\right) \subset l^{\infty }$ is convergent to $x^{0}$.
\end{theorem}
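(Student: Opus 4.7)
The plan is to view each equation~\eqref{family} as an instance of~\eqref{e0} in which the nonlinear term $a_i f(x_{i+1})$ is replaced by $a_i f(x_{i+1}) g(u^m_i)$. Because $u^m \in l^\infty$ and the hypothesis~\eqref{growth_g} (applied with $x=y$ from a fixed bounded interval) makes $g$ Lipschitz on the range of every bounded parameter sequence, the factor $g(u^m_i)$ is bounded uniformly in $i$ and $m$, so~\eqref{z2} and~\eqref{add_series} continue to hold with constants independent of $m$. The second growth hypothesis provides, for $u^m_i$ in a bounded set, an $m$-independent Lipschitz constant $D_2$ for $x \mapsto f(x) g(u^m_i)$ on $[-d,d]$, playing the role of $L_d$ in the earlier results. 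Applying Theorem~\ref{L2}, Theorem~\ref{T1} and Theorem~\ref{Theo_uniqueness} (with $\alpha=1$, hence $L_\alpha = 1$) therefore produces for every $m \in \mathbb{N}_0$ a bounded, asymptotically stable, eventually unique solution $x^m \in B$ with a common contraction constant $\vartheta < 1$ taken from~\eqref{sssss}.

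Next I would write $x^m_n = (T^{(m)} x^m)_n$ and $x^0_n = (T^{(0)} x^0)_n$ for $n \geq n_3$, where $T^{(m)}$ denotes the operator~\eqref{z10} with $g(u^m_i)$ inserted next to $a_i f(x_{i+1})$, subtract the two identities, and split
\begin{equation*}
f(x^m_{i+1}) g(u^m_i) - f(x^0_{i+1}) g(u^0_i) = \bigl[f(x^m_{i+1}) g(u^m_i) - f(x^0_{i+1}) g(u^m_i)\bigr] + \bigl[f(x^0_{i+1}) g(u^m_i) - f(x^0_{i+1}) g(u^0_i)\bigr].
\end{equation*}
The first bracket is at most $D_2 |x^m_{i+1} - x^0_{i+1}|$ by the second growth hypothesis (bounded $u^m_i$), while the second is at most $D_1 |u^m_i - u^0_i|$ by~\eqref{growth_g} (bounded $x^0_{i+1}$). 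Mimicking the Lipschitz-of-$t \mapsto t^{1/\gamma_j}$ calculation from Step~2 of the proof of Theorem~\ref{L2} and absorbing tails into the remainders $\alpha_n, \beta_n$ exactly as in~\eqref{arg} then leads to an estimate of the form
\begin{equation*}
|x^m_n - x^0_n| \leq |p_n|\,|x^m_{n-k} - x^0_{n-k}| + \Theta_n \|x^m - x^0\|_\infty + L_{\gamma^+} D_1 \alpha_n \|u^m - u^0\|_\infty,
\end{equation*}
where $\Theta_n \to 0$ as $n \to \infty$.

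To conclude, I would take $n \geq n_4$ (chosen so that $|p_n| + \Theta_n \leq \vartheta < 1$), pass to the supremum, and solve the resulting inequality to obtain $\sup_{n \geq n_4} |x^m_n - x^0_n| \leq C \|u^m - u^0\|_\infty$ for an $m$-independent $C$; then propagate the bound back to the finite block $[k, n_4)$ via the backward-iteration formula from the end of the proof of Theorem~\ref{L2}. Since the sequence $p$ never vanishes and only finitely many backward steps are needed, the factors $1/p_{n+l}$ are bounded on this block. The main obstacle is exactly this last sup-norm step: the operator $T^{(m)}$ is the identity on $[0, n_3)$, so it is not a contraction on the whole of $B$, and the shift by $k$ in the recursive estimate prevents the naive supremum argument from closing. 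The eventual uniqueness furnished by Theorem~\ref{Theo_uniqueness} is what renders the backward iteration unambiguous, so that the tail convergence actually forces convergence on all of $\mathbb{N}_k$ and the desired $x^m \to x^0$ in $l^\infty$ follows from $\|u^m-u^0\|_\infty \to 0$.
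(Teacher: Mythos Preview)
Your argument is correct and close in spirit to the paper's, but organized differently in two places that are worth noting. First, the paper does not split $f(x^m_{i+1})g(u^m_i)-f(x^0_{i+1})g(u^0_i)$; it applies hypothesis~\eqref{growth_g} directly to bound the whole $a_i$-term by $D_1\lvert u^m-u^0\rvert$ (the contraction in $x$ then comes solely from the $p_n$ term and the $q_i$ term with $\alpha=1$). Your add--subtract decomposition works too, at the cost of an extra $D_2\alpha_n$ contribution to $\Theta_n$, and it makes visible why the second growth hypothesis is there. Second, the paper does not compare $x^m$ with $x^0$ directly; it runs a Cauchy argument on the tail $\sup_{n\geq n_4}\lvert x^s_n-x^t_n\rvert$, obtains a limit, and then invokes Theorem~\ref{Theo_uniqueness} to identify that limit with the solution corresponding to $u^0$. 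Your direct comparison avoids this extra identification step. Finally, you are right to flag the finite block $[k,n_4)$: the paper works only with the tail semi-norm and does not spell out the backward-iteration step you describe, so your treatment of this point is in fact more complete.
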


\begin{proof}
Let $\left\Vert \cdot \right\Vert $ denotes the classical norm in $l^{\infty
}$. Existence of a sequence $\left( x^{m}\right) $ follows by Theorem \ref%
{L2}. Let us take any $\varepsilon >0$ and chose $s\geq t$, $s,t\in N$ so
that $\left\Vert u^{s}-u^{t}\right\Vert <\varepsilon $ which is possible
since $\left( u^{m}\right) $ is convergent and so it satisfies the Cauchy
condition. As for the convergence of the sequence $\left( x^{m}\right) $ we
see from \eqref{arg}, the definition of $B$ and \eqref{growth_g} that for
the chosen above $s\geq t$ and a suitable large $n\geq n_{3}$ 
\begin{equation*}
\begin{array}{l}
\left\vert x_{n}^{s}-x_{n}^{t}\right\vert =\left\vert
(Tx^{s})_{n}-(Tx^{t})_{n}\right\vert \,\bigskip  \\ 
\leq \left\vert p_{n}\right\vert \left\vert x_{n}^{s}-x_{n}^{t}\right\vert
+L_{\gamma ^{+}}D_{1}\sum\limits_{j=n}^{\infty }\left\vert \frac{1}{r_{j}}%
\right\vert ^{\frac{1}{\gamma ^{+}}}\sum\limits_{i=j}^{\infty }\left\vert
a_{i}\right\vert \left\vert u^{s}-u^{t}\right\vert \,\bigskip  \\ 
+L_{\gamma ^{+}}\sum\limits_{j=n}^{\infty }\left\vert \frac{1}{r_{j}}%
\right\vert ^{\frac{1}{\gamma ^{+}}}\sum\limits_{i=j}^{\infty
}q_{i}\left\vert x_{i}^{s}-y_{i}^{t}\right\vert .%
\end{array}%
\end{equation*}%
Note that for $n$ large enough, say $n\geq n_{4}\geq n_{3}$ we have 
\begin{equation*}
0<\vartheta _{1}:=\sup\limits_{n\geq n_{4}}\left( \left\vert
p_{n}\right\vert +L_{\gamma ^{+}}\sum\limits_{j=n}^{\infty }\left\vert \frac{%
1}{r_{j}}\right\vert ^{\frac{1}{\gamma ^{+}}}\sum\limits_{i=j}^{\infty
}q_{i}\right) <1
\end{equation*}%
and 
\begin{equation*}
0<\vartheta _{2}:=\sup\limits_{n\geq n_{4}}\left( L_{\gamma
^{+}}D_{1}\sum\limits_{j=n}^{\infty }\left\vert \frac{1}{r_{j}}\right\vert ^{%
\frac{1}{\gamma ^{+}}}\sum\limits_{i=j}^{\infty }\left\vert a_{i}\right\vert
\right) <1.
\end{equation*}%
Denote $\left\Vert x^{s}-x^{t}\right\Vert =\sup\limits_{n\geq
n_{4}}\left\vert x_{n}^{s}-x_{n}^{t}\right\vert $. This means that 
\begin{equation*}
\left\Vert x^{s}-x^{t}\right\Vert \leq \vartheta _{1}\left\Vert
x^{s}-x^{t}\right\Vert +\vartheta _{2}\left\Vert u^{s}-u^{t}\right\Vert .
\end{equation*}%
It follows that $\left( x^{m}\right) $ is a Cauchy sequence. We denote its
limit by $x^{0}$. Let $x^{1}$ be solution corresponding to $u^{0}$. By
Theorem \ref{Theo_uniqueness} it follows that eventually $x^{0}=x^{1}$.
\end{proof}

\section{Final comments}

As mentioned in the Introduction our results can be applied for
Sturm--Liouville difference equations. We investigate the following equation 
\begin{equation}
\Delta \left( r_{n}\left( \Delta x_{n}\right) ^{\gamma _{n}}\right)
+a_{n}x_{n+1}=0.  \label{S-L}
\end{equation}%
Applying same ideas as in the proof of Theorem \ref{L2} we get the following

\begin{corollary}
Let the sequence $(\gamma _{n})$ of the ratio of odd positive integers be
such that \eqref{gamma} holds. Assume that the sequences $r:N_{0}\rightarrow
R\setminus \{0\}$, $a:N_{0}\rightarrow R$, $\sum\limits_{i=0}^{\infty
}\left\vert a_{i}\right\vert <+\infty $, and conditions \eqref{z2} is
satisfied. Then, there exists a bounded solution $x:N_{0}\rightarrow R$ of
equation~\eqref{S-L}.
\end{corollary}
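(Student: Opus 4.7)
The approach is to specialize the proof of Theorem \ref{L2} to the Sturm--Liouville setting of \eqref{S-L}, which corresponds to $p_n\equiv 0$, $q_n\equiv 0$, $\alpha=1$, $f(x)=x$, and the absence of any $x_{n-k}$ term. Because $f$ is now globally Lipschitz with constant one, and because the neutral and pure power parts vanish, every estimate of the original proof simplifies and the backward iteration coda is not needed.

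I would begin by fixing any $d>0$, noting that the constant $M^{\ast}$ of \eqref{max} reduces to $d$ (since $|x|\le d$ gives both $|f(x)|\le d$ and $|x|^{\alpha}\le d$), and choosing $C>0$ small enough that $(2d)^{1/\gamma^{+}}C\le d$. Using \eqref{z2} together with $\sum_{i}|a_{i}|<\infty$, pick $n_{2}$ so large that $\alpha_{n}\le C$ and $\sum_{i\ge n}|a_{i}|\le 1$ for $n\ge n_{2}$. On the ball $B=\{x\in l^{\infty}:\,|x_{n}|\le d\text{ for all }n\in N_{0}\}$ I would define
\[
(Tx)_{n}=\begin{cases} x_{n}, & 0\le n<n_{2},\\[2pt] -\displaystyle\sum_{j=n}^{\infty}\left[\tfrac{1}{r_{j}}\sum_{i=j}^{\infty}a_{i}x_{i+1}\right]^{1/\gamma_{j}}, & n\ge n_{2},\end{cases}
\]
so that the scheme of Theorem \ref{L2} applies after minor bookkeeping.

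Next I would run the four step verification. (i) $T(B)\subset B$ follows by copying the chain of inequalities leading to \eqref{calc} with the $|p_{n}||x_{n-k}|$ and $|q_{i}||x_{i}|^{\alpha}$ terms absent, giving $|(Tx)_{n}|\le(2d)^{1/\gamma^{+}}C\le d$. (ii) Continuity of $T$ follows from the same Lagrange Mean Value argument as in Step~2 of Theorem \ref{L2}, with $L_{d}=1$ for the identity $f$ and the $q_{i}$ sum dropped. (iii) The measure of noncompactness estimate reduces to $diam\,(T(X))_{n}\le L_{\gamma^{+}}\alpha_{n}\,diam\,X_{n+1}$; since $\alpha_{n}\to 0$, the operator $T$ becomes a strict $\mu$-contraction with Darbo constant arbitrarily small by taking $n_{2}$ larger if needed. (iv) Darbo's theorem (Theorem \ref{D}) then produces a fixed point $x\in B$, and applying the operator $\Delta$ twice to the defining identity, exactly as in the closing part of the proof of Theorem \ref{L2}, recovers \eqref{S-L} on $N_{n_{2}}$. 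Because \eqref{S-L} has no term $x_{n-k}$, no backward iteration is required, and $x:N_{0}\to R$ is the desired bounded solution.

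The main obstacle, such as it is, lies only in balancing the constants $d$ and $C$ so that the $T(B)\subset B$ inequality closes under the exponent $1/\gamma^{+}\ge 1$. This is a direct analogue of the balancing step in Theorem \ref{L2}, and it is easier here since there is no $Pd$ term from the neutral part to absorb. Every remaining estimate is strictly simpler than in the original proof, so no new idea beyond careful tracking of the vanishing $p_{n}$, $q_{n}$ and $k$ contributions is required.
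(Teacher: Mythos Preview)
Your approach is exactly the paper's: the Corollary carries no separate proof, and the authors state only that one applies ``the same ideas as in the proof of Theorem~\ref{L2}'' to the Sturm--Liouville specialisation $p_n\equiv 0$, $q_n\equiv 0$, $f(x)=x$, which is precisely what you do.

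One small point deserves tightening. Your claim that ``no backward iteration is required'' is not quite right: the fixed point of $T$ satisfies \eqref{S-L} only for $n\ge n_{2}$, while the Corollary asks for a solution on all of $N_{0}$. In Theorem~\ref{L2} the backward step was carried out by dividing through by $p_{n+l}$, which is unavailable here since $p\equiv 0$; instead one uses the second--order structure of \eqref{S-L} directly: given $x_{n+1},x_{n+2}$ one recovers $x_{n}$ from
\[
x_{n}=x_{n+1}-\left[\tfrac{1}{r_{n}}\Bigl(r_{n+1}(x_{n+2}-x_{n+1})^{\gamma_{n+1}}+a_{n}x_{n+1}\Bigr)\right]^{1/\gamma_{n}},
\]
which is well defined because $r_{n}\neq 0$ and $t\mapsto t^{\gamma_{n}}$ is a bijection on $\mathbb{R}$. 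Only finitely many terms are produced this way, so boundedness is automatic. The paper's own Remark after Theorem~\ref{L2} (``when we do not have dependence on $k$ \ldots we have direct and obvious correspondence'') glosses over this same issue, so your write-up is faithful to the source; just replace the phrase ``no backward iteration is required'' by the one-line backward extension above.
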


\begin{example}
Concerning the nonlinear terms for which our results are applicable we see
that in Theorem \ref{L2} any $C^{1}$ function $f$ satisfies the requirement
for being locally Lipschitz. As far and Theorems \ref{T1} and \ref%
{Theo_uniqueness} linear function $f$ could be used, namely: $f\left(
x\right) =\alpha x.$ In Theorem \ref{T1_dep} once can consider $f\left(
x\right) =\alpha x$ and $g\left( u\right) =u.$
\end{example}

\section*{Acknowledgement}

The Authors would like to thank anonymous Referees for suggestions which
helped to improve both the presentation and the quality of results contained
in this paper.


\begin{thebibliography}{99}
\bibitem{bib1B} Agarwal R P. Difference equations and inequalities. Theory,
methods, and applications. Second edition, Monographs and Textbooks in Pure
and Applied Mathematics. 228. New York: Marcel Dekker, Inc. 2000.

\bibitem{ABGO} Agarwal R P, Bohner M, Grace S R, O'Regan D. Discrete
oscillation theory. Contemporary Mathematics and Its Applications. New York:
Hindawi Publishing Corporation, 2005.

\bibitem{AW} Agarwal R P, Wong P J Y. Advanced topics in difference
equations. Dordrecht: Kluwer, 1997.

\bibitem{B} Bajo I, Liz E. Global behaviour of a second--order nonlinear
difference equation. J. Difference Equ. Appl. 2011; 17: 1471--1486.

\bibitem{bibM1B} Bana\'{s} J, Goebel K. Measures of noncompactness in Banach
spaces. Lecture Notes in Pure and Applied Mathematics, 60. New York: Marcel
Dekker, Inc., 1980.

\bibitem{bib1} Bana\'{s} J, Rzepka B. An application of measure of
noncompactness in study of asymptotic stability. Appl. Math. Lett. 2003; 16:
1--6.

\bibitem{dosly1} Do\v{s}l\'{y} O. Oscillation theory of a class of higher
order Sturm--Liouville difference equations. Proceedings of the Sixth
International Conference on Difference Equations. CRC, Boca Raton, FL. 2004:
407--415, .

\bibitem{DOSLY2} Do\v{s}l\'{y} O, Hasil P. Critical higher order
Sturm--Liouville difference operators. J. Difference Equ. Appl. 2011; 17:
1351--1363

\bibitem{DOSLY3} Do\v{s}l\'{y} O. Oscillation theory of symplectic
difference systems. Elaydi, Saber (ed.) et al. Advances in discrete
dynamical systems. Adv. Stud. Pure Math. 53. Tokyo: Math. Soc. Japan, 2009.

\bibitem{bib2B} Elaydi S N. An Introduction to Difference Equations.
Undergraduate Texts in Mathematics. New York: Springer, 2005.

\bibitem{MGES} Galewski M, Jankowski R, Nockowska-Rosiak M, Schmeidel E. On
the existence of bounded solutions for nonlinear second order neutral
nonlinear difference equations. arXiv:1304.2501 [math.CA]

\bibitem{MG} Galewski M. A note on the well--posed anisotropic discrete
BVPs. J. Difference Equ. Appl. 2012, 9: 1607--1610.

\bibitem{bib3B} Koci\'{c} V L, Ladas G. Global behavior of nonlinear
difference equations of higher order with applications. Mathematics and its
Applications. 256. Dordrecht: Kluwer, 1993.

\bibitem{KocicLadas} Koci\'{c} V L, Ladas G. Linearized oscillations for
difference equations. Hiroshima Math. J. 1992; 22: 95--102.

\bibitem{LadasQian} Ladas G, Qian C, Yana J. Oscillations of higher order
neutral differential equations. Portugal. Math. 1991; 48: 291--307.

\bibitem{bib4B} Kelley W G, Peterson A C. Difference equations: an
introduction with applications. San Diego: Academic Press, 2001.

\bibitem{bib2} Migda J, Migda M. Asymptotic properties of the solutions of
second order difference equation. Archivum Math. 1998; 34: 467--476.

\bibitem{bib3} Migda M, Schmeidel E, Zb\c{a}szyniak Z. On the existence of
solutions of some second order nonlinear difference equations. Archivum
Math. 2005; 42: 379--388.

\bibitem{bib5} Schmeidel E. Asymptotic behaviour of solutions of the second
order difference equations. Demonstratio Math. 1993; 25: 811--819.

\bibitem{SchmeildelZbaszyniakCAMW} Schmeidel E, Zb\c{a}szyniak Z. An
application of Darbo's fixed point theorem in the investigation of
periodicity of solutions of difference equations. Comput. Math. Appl. 2012;
64: 2185--2191.

\bibitem{TKP1} Thandapani E, Kavitha N, Pinelas S. Oscillation criteria for
second--order nonlinear neutral difference equations of mixed type. Adv.
Difference Equ. 2012, DOI: 10.1186/1687--1847--2012--4, 10 pp.

\bibitem{TKP2} Thandapani E, Kavitha N, Pinelas S. Comparison and
oscillation theorem for second--order nonlinear neutral difference equations
of mixed type. Dynam. Systems Appl. 2012; 21: 83--92.

\bibitem{wong} Wong J S W. On the generalized Emden--Fowler equations. SIAM
Review. 1975; 17: 339--360.
\end{thebibliography}
\end{document}